\documentclass[11pt, a4paper]{amsart}
\usepackage{amssymb, array, amsmath, amscd, pdfpages, 
enumerate, amsthm, mathtools, overpic, xcolor, dsfont, amsrefs}
\usepackage[T1]{fontenc}
\usepackage[colorlinks=true,allcolors=magenta]{hyperref}
\mathtoolsset{showonlyrefs=true}

\newtheorem{theorem}{Theorem}[section]
\newtheorem{proposition}[theorem]{Proposition}
\newtheorem{lemma}[theorem]{Lemma}

\theoremstyle{definition}

\newtheorem{remark}[theorem]{Remark}
\numberwithin{equation}{section}

\newcommand{\R}{\mathbb{R}}
\newcommand{\C}{\mathbb{C}}
\newcommand{\B}{\mathcal{B}}

\DeclareMathOperator{\re}{Re}

\DeclareMathOperator{\Ran}{Ran}

\title[Klein--Gordon equations with Kelvin--Voigt damping]
{Optimal energy decay rates for Klein--Gordon equations with Kelvin--Voigt damping}

\author[F. Dell'Oro, L. Paunonen, D. Seifert]
{Filippo Dell'Oro, Lassi Paunonen and David Seifert}

\address[F. Dell'Oro]{Dipartimento di Matematica, Politecnico di Milano
\newline\indent
Via Bonardi 9, 20133 Milano, Italy}
\email{filippo.delloro@polimi.it}

\address[L. Paunonen]{Mathematics Research Centre, Tampere University
\newline\indent P.O.\ Box 692, 33101 Tampere, Finland}
\email{lassi.paunonen@tuni.fi}

\address[D. Seifert]{School of Mathematics, Statistics and Physics,
\newline\indent Newcastle University
\newline\indent
Herschel Building, Newcastle upon Tyne, NE1 7RU, United Kingdom}
\email{david.seifert@ncl.ac.uk}

\begin{document}

\begin{abstract}
We study the long-time behaviour of solutions to 
a one-dimensional linear  Klein--Gordon equation with Kelvin--Voigt damping.
One of the interesting features of the equation is that the generator
of the associated $C_0$-semigroup has
multiple spectral points on the imaginary axis. 
As our main result, we show that the energy of every possible solution converges to zero as time goes to infinity and, moreover, we provide an optimal polynomial energy decay rate for a certain class of solutions.
\end{abstract}

\subjclass[2020]{35B40, 35L05, 47D06}
\keywords{Klein--Gordon equation, Kelvin--Voigt damping, resolvent estimates, rates of decay, optimality}

\maketitle

\section{Introduction}

\noindent
In this paper we study the long-time behaviour of solutions to the following one-dimensional Klein--Gordon equation with Kelvin--Voigt damping:
\begin{equation}
\label{eq1}
\begin{cases}\begin{aligned}  
 u_{tt}(x,t) &- u_{xx}(x,t) +m u(x,t)- u_{t xx}(x,t)=0, &&\,\,\, x \in \R,\ t>0,\\
 u(x,0)&=u_0(x), &&\,\,\, x \in \R, \\
 u_t(x,0) &= v_0(x), &&\,\,\, x \in \R,
\end{aligned}\end{cases}
\end{equation}
where $m>0$ is a fixed constant.
The initial data satisfy $u_0 \in H^1(\R)$ and $v_0 \in L^2(\R)$. Setting $z_0=(u_0,v_0)$,
we define the energy of the corresponding solution as
\begin{equation}
\label{en-def}
E_{z_0}(t) = \frac12 \int_\R m|u(x,t)|^2 + |u_x(x,t)|^2 + |u_t(x,t)|^2 dx, \quad\, t\geq0.
\end{equation}
For sufficiently regular solutions, a straightforward calculation yields the energy identity
$$
\dot{E}_{z_0}(t)
= - \int_\R |u_{tx}(x,t)|^2 dx,\quad\, t\geq0,
$$
so that $E_{z_0}$ is non-increasing. The main aim of this article is to show that $E_{z_0}(t)$ actually
decays to zero as $t\to\infty$ for all possible solutions and, in addition, to find optimal
estimates for the rate of decay for certain classes of solutions.  

When considering Klein--Gordon equations with Kelvin--Voigt damping posed on a finite interval (or more generally
on a bounded multi-dimensional domain) with Dirichlet boundary conditions, it is well known that 
the energy decays exponentially to zero for all possible solutions, that is,
the associated contraction $C_0$-semigroup is exponentially stable. Indeed, in this situation,
one can take advantage of the Poincar\'e inequality and achieve exponential stability 
by following the same strategy employed for the viscous global damping, namely,
by constructing a perturbed energy functional which satisfies a convenient differential inequality 
and then applying the Gr\"onwall lemma.
Several results on the stability (and regularity) properties of linear wave equations with 
Kelvin--Voigt damping on bounded domains may be found for instance in \cite{ROBBIANO,BURQ1,BURQ2,CHENTRI,CHEN,LIU,SUN,WEB}. 

When studying problem \eqref{eq1}, or more generally
Klein--Gordon equations with Kelvin--Voigt damping
on unbounded multi-dimensional domains whose geometries are incompatible with the validity of the Poincar\'e inequality, the picture changes drastically. 
Indeed, as we will show in the present article, 
problem \eqref{eq1} is associated with a contraction $C_0$-semigroup $(T(t))_{t\geq0}$ on the Hilbert space
$X=H^1(\R)\times L^2(\R)$ whose infinitesimal generator $A$ possesses multiple (but a finite number of) spectral points on the imaginary axis.
More precisely, the intersection of the spectrum of $A$ with the imaginary axis consists exactly of the two points
$\pm i\sqrt{m}$.
This feature prevents $(T(t))_{t\geq0}$ from being exponentially stable or even semi-uniformly
stable in the sense of \cite{BatDuy08,BorTom10}. However,
since these spectral points are not eigenvalues of $A$, we may exploit 
the classical Arendt--Batty--Lyubich--V\~u theorem~\cite{AreBat88} and deduce that the energy
$E_{z_0}(t)$ converges to zero as $t\to\infty$ for all initial data $z_0\in X$. On the other hand, as already noted, with a view to finding estimates for the rate of decay of certain classes of solutions,
the presence of two imaginary spectral points does not allow us to exploit the classical methods of semi-uniform stability \cite{BatDuy08,BorTom10} or their generalisation \cite{BatChi16,RSS} which crucially 
depend on the spectrum of $A$ touching the imaginary axis at most at zero. The abstract criteria from \cite{MAR}
accommodate finite spectral points on the imaginary axis but, although they apply in the more general setting of Banach spaces, they provide only sub-optimal decay rates in Hilbert spaces.
Instead, our strategy consists in combining the
results of \cite{BatChi16} with those of \cite{Pau14c}, finding 
in our main Theorem~\ref{thm:EnergyDecay} the optimal decay estimate 
$$E_{z_0}(t)= O(t^{-2}),\quad\,\,\text{as }t\to\infty,$$ 
for a certain class of solutions to problem \eqref{eq1}. These solutions are precisely those that originate 
from initial data $z_0$ lying in the intersection of the ranges of the operators $i\sqrt{m}-A$ and $-i\sqrt{m}-A$. 

\smallskip
\begin{table}[htpb]
    \centering
    \renewcommand{\arraystretch}{1.5}
    \begin{tabular}{|p{0.23\textwidth}||p{0.29\textwidth}|p{0.30\textwidth}|}
        \hline
        {Spatial domain} & {{Viscous damping $u_t$}} & {Kelvin--Voigt damping}\\
        \hline\hline
        {Bounded interval} & 
        {Exponential decay for all initial data}  & 
        {Exponential decay for all initial data}\\
        \hline
        {Whole line $\R$} & 
        {Exponential decay for all initial data}  & 
        {Optimal polynomial decay of order $t^{-2}$ for selected initial data}\\
        \hline
    \end{tabular}
    \vspace{0.4cm}
    \caption{Energy decay for the linear Klein--Gordon equation for different damping mechanisms and spatial domains.}
\end{table}

We conclude by mentioning that various results dealing with energy decay of Klein--Gordon equations on unbounded domains with viscous damping have appeared in the 
literature:\ focusing exclusively on papers treating linear models, and without any claim to completeness,
we refer the reader to \cite{BJ,GREEN,KOTA,STANI,ROY,WANG,WANG2,JARED}. Moreover, asymptotic profiles for solutions to wave and Klein--Gordon equations on unbounded domains with different damping mechanisms, including the Kelvin--Voigt damping, have been extensively studied; see e.g.\ \cite{BA,IKECHEN,IKEDA,IKE,IKE2,IKETA,IKETODO,LIZ,PISKE}, to give just a small selection. We emphasise in particular that the exact model~\eqref{eq1} in $\R^n$ with $n\geq1$ has previously been investigated in \cite{IKEDA}, where the authors employed Fourier analysis techniques to derive asymptotic profiles for the solutions.

\smallskip
The article is organised as follows. In the next Section \ref{S2}
we recast \eqref{eq1} as an abstract 
Cauchy problem on $X$
and we establish existence of the associated contraction $C_0$-semigroup $(T(t))_{t\geq0}$. We also 
prove the aforementioned description of the boundary spectrum of
the semigroup generator $A$. In the subsequent Section \ref{S3} we establish upper bounds for
the norm of the resolvent operator along the imaginary axis, both near the two spectral points $\pm i\sqrt{m}$
and at infinity. Section \ref{S4} is devoted to the proof of our main result.  
Finally, in Section~\ref{S5}, we briefly discuss the case when the equation is posed on the
half-line with a Dirichlet boundary condition, explaining how our techniques can be adapted to treat that situation.

\medskip
\noindent
{\bf Notation.} 
The notation used is standard throughout.
If $A$ is a closed linear operator on a (complex) Hilbert space, we denote its 
domain by $D(A)$, its range by $\Ran(A)$, its spectrum by $\sigma(A)$, its point spectrum by $\sigma_p(A)$,
its continuous spectrum by $\sigma_c(A)$ and its resolvent set by $\rho(A)$. 
For $\lambda\in\rho(A)$ we write $R(\lambda,A)$ for the resolvent operator $(\lambda-A)^{-1}$. 
The symbol $\B(X)$ stands for the space of bounded linear operators on a Banach space $X$. 
We denote by $\C_{-}$ and $\C_{+}$ the open left and right complex half-plane, respectively, 
and we set $\R_+=[0,\infty)$.
Given $\lambda\in\C$, we define the square root $\sqrt{\lambda}$ by taking the branch cut along the
negative real axis. In particular,
$\re\sqrt{\lambda}\geq0$ for all $\lambda\in\C$, with strict inequality for $\lambda\not\in(-\infty,0]$.
For $p,q\in\R$
we write $p\lesssim q$ to indicate that $p\leq Cq$ 
for some constant $C>0$.
Finally, we use conventional asymptotic notation, including `big O' and `little o'.

\medskip
\section{The Semigroup Generator and Its Boundary Spectrum}
\label{S2}

\noindent
We consider the complex Hilbert space 
$X=H^1(\R)\times L^2(\R),$
endowed with the norm 
$$\|z\|_X^2 = m\|u\|_{L^2(\R)}^2 + \|u'\|_{L^2(\R)}^2 + \|v\|_{L^2(\R)}^2$$
for  $z=(u,v)\in X$. 
Then, we rewrite \eqref{eq1} as an abstract Cauchy problem on $X$, that is
\begin{equation}
\label{eq:ACP}
\left\{
\begin{aligned}
\dot{z}(t) &= A z(t),\quad t\ge0,\\
z(0)&=(u_0,v_0)\in X,
\end{aligned}\right.
\end{equation}
where $A: D(A) \subseteq X \to X$ is the linear operator $Az = (v, (u+v)''-m u)$
with domain
$$
D(A) = \big\{ (u,v)\in H^1(\R)\times H^1(\R) : u+v \in H^2(\R)\big\}.
$$
The following result summarises the main properties of $A$, including a description of
its spectrum on the imaginary axis $i\R$.

\begin{theorem}
\label{theo-A}
The following hold:
\begin{itemize}
\vspace{0.1mm}
\item[(a)] $A$ is closed;

\vspace{0.4mm}
\item[(b)] $A$ is dissipative;

\vspace{0.4mm}
\item[(c)] $\sigma(A) \cap i\R =\{i\sqrt{m},-i\sqrt{m}\}$;

\vspace{0.4mm}
\item[(d)] $\sigma_p(A) \cap i\R =\emptyset$.

\end{itemize}
\end{theorem}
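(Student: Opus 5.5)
We will prove (b) first, then use the resolvent equation to obtain (a), (c) and (d) together, working on the Fourier side where convenient.

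\textbf{Dissipativity.} For $z=(u,v)\in D(A)$ we compute, integrating by parts (legitimate because $u,v\in H^1(\R)$ and $u+v\in H^2(\R)$),
$$
\langle Az,z\rangle_X = m\langle v,u\rangle + \langle v',u'\rangle + \langle (u+v)''-mu, v\rangle = m\langle v,u\rangle - m\langle u,v\rangle + \langle v',u'\rangle - \langle u',v'\rangle - \|v'\|^2 .
$$
The first two pairs are purely imaginary, so $\re\langle Az,z\rangle_X=-\|v'\|_{L^2}^2\le 0$.

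\textbf{Solving the resolvent equation.} Given $\lambda\in\C$ and $(f,g)\in X$, the equation $(\lambda-A)(u,v)=(f,g)$ gives $v=\lambda u-f$ together with
$$
\lambda^2 u + m u - (1+\lambda)\big(u''\big) = g+\lambda f - f''.
$$
More cleanly, set $w=u+v=(1+\lambda)u-f$. For $\lambda\neq-1$ the equation becomes
$$
\frac{\lambda^2+m}{1+\lambda}\,w - w'' = g+\lambda f + \frac{\lambda^2+m}{1+\lambda}f =: h .
$$
Here $h\in L^2(\R)$.
\begin{itemize}
\item If $\mu:=\frac{\lambda^2+m}{1+\lambda}\notin(-\infty,0]$, then $\mu-\partial_x^2$ is an isomorphism $H^2\to L^2$ with multiplier $(\mu+\xi^2)^{-1}$. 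We then obtain a unique $w\in H^2$, $u=(w+f)/(1+\lambda)\in H^1$ and $v=\lambda u-f\in H^1$, so $\lambda\in\rho(A)$.
\item For $\lambda=1$ this gives $1\in\rho(A)$, so $A$ is closed, which is (a). Together with (b), Lumer--Phillips then gives the contraction semigroup.
\item For $\lambda=is$ with $s\in\R$ we have $\mu=(m-s^2)(1-is)/(1+s^2)$. This lies in $(-\infty,0]$ only if $s(m-s^2)=0$ and $m-s^2\le0$, i.e.\ exactly when $s=\pm\sqrt m$, where $\mu=0$. Note that $s=0$ gives $\mu=m>0$.
\end{itemize}
Hence $i\R\setminus\{\pm i\sqrt m\}\subseteq\rho(A)$.

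\textbf{The points $\pm i\sqrt m$.} Here the equation reduces to $-w''=h$ on $\R$.

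For (d): an eigenvector would give $f=g=0$, hence $w''=0$ with $w\in H^2(\R)$, so $w=0$. Then $u=w/(1+\lambda)=0$ and $v=0$. Together with the previous step this proves (d).

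For (c) we need $\pm i\sqrt m\in\sigma(A)$, i.e.\ $\lambda-A$ is not surjective. Take $f=0$ and $g=h\in L^2$ with $\hat h$ nonzero near $\xi=0$, e.g.\ $\hat h=\mathds 1_{[-1,1]}$. A solution would require $\hat w=\hat h/\xi^2\in L^2$, which fails because $\xi^{-2}$ is not square integrable near $0$. So the range is not all of $X$, giving (c).

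The main care point is the precise handling of the branch/location of $\mu$ on the imaginary axis and checking that $u,v\in H^1$ individually (not only $u+v\in H^2$), which follows from $u=(w+f)/(1+\lambda)$ and $f\in H^1$.
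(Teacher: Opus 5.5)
Your proof is correct. For (b), (d) and the inclusion $i\R\setminus\{\pm i\sqrt m\}\subseteq\rho(A)$ it matches the paper: both reduce $(\lambda-A)z=(f,g)$ to $\mu w-w''=h$ for $w=u+v$. The only difference there is cosmetic: the paper inverts $\mu-\partial_x^2$ by convolution with the Green's function $G_s$, and you use the Fourier multiplier $(\mu+\xi^2)^{-1}$.

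You genuinely differ in two places.

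\textbf{Closedness (a).} The paper asserts closedness by standard methods, then uses it via the closed graph theorem to get $\lambda\in\rho(A)$ from bijectivity of $\lambda-A$. You reverse the logic: the solution operator at $\lambda=1$ is a composition of bounded maps, so $1\in\rho(A)$ and closedness follows. This is more self-contained. You should, however, say explicitly that this inverse is bounded, since before closedness is known, bijectivity alone would not give $1\in\rho(A)$.

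\textbf{The points $\pm i\sqrt m\in\sigma(A)$.} The paper builds a Weyl sequence $z_k=(u_k,i\sqrt m\,u_k)$, showing $i\sqrt m$ is an approximate eigenvalue. You instead exhibit the concrete vector $(0,h)$, with $\hat h$ the indicator of $[-1,1]$, lying outside $\Ran(i\sqrt m-A)$. The Weyl-sequence argument carries over almost verbatim to the Dirichlet half-line problem of Section 5 (just shift the bump). Yours relies on the Fourier transform on $\R$ and would need reworking there. On the other hand, yours is shorter and anticipates the characterisation of the ranges via integrability of antiderivatives at the end of Section 4.

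\textbf{A harmless slip.} The correct right-hand side is $h=g+\lambda f-\mu f$, not $g+\lambda f+\mu f$. You only use that $h\in L^2$ depends boundedly on $(f,g)$, and the two expressions agree at $\mu=0$, so nothing downstream is affected.
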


\begin{proof}
(a) Closedness of $A$ may be verified by means of standard methods, for instance by adapting  the argument given in the proof of \cite[Lemma 2.1]{NgSei20}.

\smallskip
\noindent
(b) For every $z = (u,v)\in D(A)$, a straightforward calculation yields
$$
\re \langle A z,z\rangle_X = -\|v'\|_{L^2(\R)}^2\leq0,
$$
so $A$ is dissipative. 

\smallskip
\noindent
(c) We begin by proving that $i\R \setminus \{i\sqrt{m},-i\sqrt{m}\}\subseteq\rho(A)$.
To this end, let us fix $s\in\R$ with $|s| \neq \sqrt{m}$. Since $A$ is closed,
it is enough to show that for every $\hat z=(\hat u,\hat v)\in X$ the resolvent equation
\begin{equation}
\label{res-im}
(i s - A) z = \hat z
\end{equation}
has a unique solution $z = (u,v)\in D(A)$. 
The equation is equivalent to
\begin{align*}
i s u(x) - v(x)= \hat u(x),\\
i s v(x) - (u+v)''(x) +m u(x)= \hat v(x). 
\end{align*}
Setting $w=u+v$ 
we may rewrite the system above in the form
\begin{subequations}
\begin{align}
\label{sist1}
(1+i s) u(x) - w(x)= \hat u(x),\\
\label{sist2}
(m-s^2) u(x) - w''(x) = i s \hat u(x) + \hat v(x).
\end{align}
\end{subequations}
Substituting \eqref{sist1} into \eqref{sist2}, we obtain
\begin{equation}
\label{res}
- w''(x) +\lambda_s\hspace{0.3mm} w(x)= \hat v(x)  +(is- \lambda_s)\hat u(x), 
\end{equation}
where
\begin{equation}
\label{deff}
\lambda_s = \frac{m-s^2}{1+is}\not\in(-\infty,0].
\end{equation}
To solve \eqref{res}, we introduce the 
Green's function 
\begin{equation}
\label{defGreen}
G_s(x) = \frac{1}{2 \sqrt{\lambda_s}} \, {\rm e}^{-\sqrt{\lambda_s} |x|}.
\end{equation}
Since ${\rm Re} \sqrt{\lambda_s}>0$, 
it is readily seen that
$G_s \in L^1(\R)$. 
Denoting by $*$ the convolution product on $\R$, 
the solution to \eqref{res} 
may be written as
$$
w(x) = (G_s \,*\, \hat v)(x) + (is- \lambda_s)(G_s \,*\, \hat u)(x).
$$
Indeed, by means of a simple calculation, we obtain
$$
w'(x) = (Q_s \,*\, \hat v)(x) + (is- \lambda_s)(Q_s \,*\, \hat u)(x),
$$
where
\begin{equation}
\label{defQ}
Q_s(x) =\frac12\big[{\rm e}^{\sqrt{\lambda_s}x} H(-x) - {\rm e}^{-\sqrt{\lambda_s}x} H(x)\big]
\end{equation}
with $H$ being the Heaviside step function.
Note that $w,w'\in L^2(\R)$ by
Young’s inequality for convolutions and the fact that $G_s,Q_s\in L^1(\R)$. A further simple calculation
now shows that $w$ solves \eqref{res} and so in particular $w\in H^2(\R)$.
Once $w=u+v$ has been found, it is immediate to check that 
\begin{equation}
\begin{cases}
\label{u-form}
 u(x)= \displaystyle\frac{1}{1+is}\big[(G_s \,*\, \hat v)(x) + (is- \lambda_s)(G_s \,*\, \hat u)(x)
+ \hat u(x)\big]\\
\noalign{\vskip2.3mm}
v(x)= \displaystyle \frac{1}{1+is}\big[is(G_s \,*\, \hat v)(x) -(s^2+is\lambda_s)(G_s \,*\, \hat u)(x)-\hat u(x)\big]
\end{cases}
\end{equation}
is the desired unique solution to \eqref{res-im}.

It remains to show that $\pm i \sqrt{m}\in\sigma(A)$.
We will work with $i \sqrt{m}$ only; an analogous argument applies to 
$-i \sqrt{m}$. For $k\geq1$, let
$$
u_k(x) = \frac{1}{\sqrt{k}} \Phi\Big(\frac{x}{k}\Big), 
$$
where $\Phi:\R \to [0,1]$ is a smooth bump function 
with $\Phi(x)=1$ for $|x|\leq 1/2$ and $\Phi(x)=0$ for $|x|\geq1$. Since $u_k \in H^2(\R)$
it follows immediately that $z_k=(u_k,i \sqrt{m}u_k)\in D(A)$ for every $k\geq1$. Moreover,
$$
\|z_k\|_X^2 \geq 2m \|u_k\|^2_{L^2(\R)} 
= \frac{2m}{k}\int_\R \Big|\Phi\Big(\frac{x}{k}\Big) \Big|^2  dx \geq  2m.
$$ 
On the other hand, it is straightforward to check that
$$
\|(i\sqrt{m}-A)z_k\|_X^2 = (1+m)\|u_k''\|^2_{L^2(\R)} 
\leq \frac{2(1+m)}{k^4}\|\Phi''\|_{L^\infty(\R)}^2
\to 0
$$
as $k\to\infty$.
Hence $i\sqrt{m}$ is an approximate eigenvalue of $A$.

\smallskip
\noindent
(d) If $(i\sqrt{m}-A)z=0$ for some $z=(u,v)\in D(A)$, reasoning 
as above in the passage from \eqref{res-im}
to \eqref{res} 
we see that $w=u+v$ satisfies the equation $w''(x)=0$. 
Since $w\in H^2(\R)$, this forces $w=0$ and therefore $z=0$ as well.
Hence $i\sqrt{m}\notin\sigma_p(A)$.
An analogous argument applies to 
$-i \sqrt{m}$.
\end{proof}

Note that Theorem \ref{theo-A} tells us in particular that $0\in\rho(A)$.
Since the resolvent set is open, this implies that $\lambda-A$ is surjective for some
$\lambda>0$.
Since $A$ is also dissipative, exploiting the Lumer-Phillips theorem we obtain the following result;
see e.g. \cite[Corollary II.3.20]{ENGNAG}.

\begin{theorem}
\label{theo-gen}
The operator $A$ is densely defined and generates a contraction $C_0$-semigroup $(T(t))_{t\geq0}$ on the Hilbert space $X$. 
\end{theorem}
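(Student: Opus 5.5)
The plan is to apply the Lumer--Phillips theorem in the Hilbert-space form of \cite[Corollary II.3.20]{ENGNAG}. That result says a closed, dissipative operator $A$ on a Hilbert space for which $\lambda-A$ is surjective for some $\lambda>0$ is automatically densely defined and generates a contraction $C_0$-semigroup. Theorem~\ref{theo-A}(a) and (b) already give closedness and dissipativity, so only the range condition needs checking. Density of $D(A)$ comes for free from the reflexivity of Hilbert spaces, which is built into the cited corollary.

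For the range condition, I would first note that Theorem~\ref{theo-A}(c) gives $i\R\setminus\{\pm i\sqrt m\}\subseteq\rho(A)$, so in particular $0\in\rho(A)$. Since $\rho(A)$ is open, there is some $\varepsilon>0$ with $(0,\varepsilon)\subseteq\rho(A)$, and for any such $\lambda$ the operator $\lambda-A$ is surjective. To make this quantitative, I would use the Neumann series: $\lambda-A=(I-\lambda R(0,A))(-A)$ is invertible as soon as $\lambda<\|R(0,A)\|^{-1}$. The operator $R(0,A)$ is bounded, as is visible from the explicit formulas \eqref{u-form} with $s=0$, where $G_0,Q_0\in L^1(\R)$ and Young's inequality apply.

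Alternatively, one can avoid the openness argument altogether and solve $(\lambda-A)z=\hat z$ directly for real $\lambda>0$, just as in the proof of (c). This gives $-w''+\mu w=\hat v+(\lambda-\mu)\hat u$ with $\mu=(m+\lambda^2)/(1+\lambda)>0$, which is solved by the Green's function $e^{-\sqrt\mu|x|}/(2\sqrt\mu)$. Either route works.

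I expect no real obstacle here. The only point needing care is that dissipativity is meant with respect to the $X$-inner product inducing the given norm, which is exactly the computation in (b). Density of $D(A)$ also needs no separate verification, since the cited corollary supplies it. If a direct density argument were wanted, one could observe that $C_c^\infty(\R)^2\subseteq D(A)$ and that this set is dense in $H^1(\R)\times L^2(\R)$.
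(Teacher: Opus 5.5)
Your proposal is correct and follows the paper's argument: $0\in\rho(A)$ from Theorem~\ref{theo-A}(c), openness of $\rho(A)$ to get surjectivity of $\lambda-A$ for small $\lambda>0$, and then the Hilbert-space Lumer--Phillips corollary \cite[Corollary II.3.20]{ENGNAG}, which also gives density of $D(A)$. There is one trivial slip: the factorisation should read $\lambda-A=(I+\lambda R(0,A))(-A)$, since yours equals $-\lambda-A$, but the Neumann series condition $\lambda<\|R(0,A)\|^{-1}$ is unaffected.
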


\section{Resolvent Estimates}
\label{S3}

\noindent
In this section, we obtain upper bounds on the growth rate of $\|R(is,A)\|$ as $|s|$
tends to $\sqrt{m}$ and to infinity. Our main result here is the following theorem.

\begin{theorem}
\label{res-est}
The following hold:
\begin{itemize}

\item[(a)] $\|R(is,A)\| = O(1)$ as $|s|\to\infty$;

\vspace{0.7mm}
\item[(b)] $\|R(is,A)\| = O\big(|s-\sqrt{m}|^{-1}\big)$ as $s\to\sqrt{m}$;

\vspace{0.7mm}
\item[(c)] $\|R(is,A)\| = O\big(|s+\sqrt{m}|^{-1}\big)$ as $s\to-\sqrt{m}$.
\end{itemize}
\end{theorem}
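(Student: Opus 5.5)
Since every $is$ with $|s|\neq\sqrt m$ lies in $\rho(A)$ by Theorem~\ref{theo-A}, we can use the explicit solution formulas \eqref{u-form} from its proof and bound each term. The natural tool is Fourier analysis. Equation \eqref{res} is a constant-coefficient ODE on $\R$, so convolution with $G_s$ is the Fourier multiplier $(\xi^2+\lambda_s)^{-1}$, and convolution with $Q_s$ is the multiplier $i\xi(\xi^2+\lambda_s)^{-1}$. Plancherel then reduces everything to sup bounds of these multipliers, which replaces the crude Young-inequality bound $\|G_s\|_{L^1}\le |\lambda_s|^{-1}\cdot|\sqrt{\lambda_s}|/\re\sqrt{\lambda_s}$. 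Writing $\hat z=(\hat u,\hat v)$, we must bound $\|u\|_{H^1}$ and $\|v\|_{L^2}$ by $\|\hat u\|_{H^1}+\|\hat v\|_{L^2}$.

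The key quantity is
$$\inf_{\xi\in\R}|\xi^2+\lambda_s|, \qquad \lambda_s=\frac{(m-s^2)(1-is)}{1+s^2}.$$
We have $\re\lambda_s=(m-s^2)/(1+s^2)$ and $\operatorname{Im}\lambda_s=-s(m-s^2)/(1+s^2)$.

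\emph{Near $s=\pm\sqrt m$.} Here $|\operatorname{Im}\lambda_s|\simeq |m-s^2|\simeq|s\mp\sqrt m|$, so $|\xi^2+\lambda_s|\ge|\operatorname{Im}\lambda_s|\gtrsim|s\mp\sqrt m|$ uniformly in $\xi$. For large $\xi$ we also have $|\xi^2+\lambda_s|\gtrsim \xi^2$. We then check each term of \eqref{u-form}:
\begin{itemize}
\item $G_s*\hat v$ is bounded in $L^2$ by $|s\mp\sqrt m|^{-1}\|\hat v\|$.
\item $(G_s*\hat v)'$ has multiplier $|\xi|/|\xi^2+\lambda_s|$. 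Its supremum is of order $|\lambda_s|^{-1/2}$ up to the angle factor. This is the delicate spot, since $\lambda_s$ becomes nearly real, so the bound must come from $|\operatorname{Im}\lambda_s|$. Near $\xi^2\approx-\re\lambda_s$ the multiplier is $\lesssim |\xi|/|\operatorname{Im}\lambda_s|$. When $\re\lambda_s<0$ this gives order $|\lambda_s|^{1/2}/|\lambda_s|=|\lambda_s|^{-1/2}\lesssim|s\mp\sqrt m|^{-1}$, which is acceptable.
\item The $\hat u$ terms carry the prefactor $(is-\lambda_s)$, which is bounded, and $\hat u\in H^1$ supplies an extra $\langle\xi\rangle$. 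So the multipliers $\langle\xi\rangle/|\xi^2+\lambda_s|$ and $\xi^2/|\xi^2+\lambda_s|$ (the latter for the $H^1$ norm of the $\hat u$ term) are again $O(|s\mp\sqrt m|^{-1})$.
\end{itemize}
This gives (b) and (c).

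\emph{As $|s|\to\infty$.} Here $\lambda_s\approx is-1$, since $(m-s^2)/(1+is)=(i s+1)\cdot\frac{s^2-m}{1+s^2}\cdot(-1)\cdot(-1)$ up to sign. Computing gives $\lambda_s\to\infty$ with $|\lambda_s|\sim|s|$ and argument tending to $\pm\pi/2$ (the real part tends to $-1$). Hence $|\xi^2+\lambda_s|\gtrsim \xi^2+|s|$ uniformly. This is where the main obstacle lies: the prefactors $1/(1+is)$ in \eqref{u-form} are $O(|s|^{-1})$, but the $v$ formula contains $s^2 G_s*\hat u$ and $is\lambda_s G_s*\hat u$. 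The dangerous terms are:
\begin{itemize}
\item In $v$, the term $\frac{s^2+is\lambda_s}{1+is}G_s*\hat u$. Its coefficient $s^2+is\lambda_s$ simplifies exactly: $s^2+is\lambda_s=s^2+\frac{is(m-s^2)}{1+is}=\frac{s^2+ism}{1+is}$, which is $O(|s|)$. So the term is $O(1)\cdot\|G_s*\hat u\|$, and since $\|G_s*\hat u\|\lesssim|s|^{-1}\|\hat u\|$, it is bounded.
\item In $v$, the term $\frac{is}{1+is}G_s*\hat v$ is bounded, since its multiplier is $O(|s|^{-1})$.
\item In $u$, the $H^1$ norm involves multipliers $\langle\xi\rangle/(|s|\,|\xi^2+\lambda_s|)$ and $|is-\lambda_s|\,\langle\xi\rangle/(|s|\,|\xi^2+\lambda_s|)$ with $|is-\lambda_s|=O(1)$. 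Both are bounded, and the term $\hat u/(1+is)$ is trivially bounded.
\end{itemize}
This yields (a). The only real care needed is the exact algebraic cancellation in the coefficients and the uniform lower bound for $|\xi^2+\lambda_s|$ in each regime.
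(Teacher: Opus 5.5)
Your proposal is correct. It has the same skeleton as the paper's proof: bound the explicit formulas \eqref{u-form} term by term. The difference is the tool used for the convolution operators.

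\textbf{Paper's route.} The paper uses Young's inequality with the exact values $\|G_s\|_{L^1}=|\lambda_s|^{-1}[\cos(\tfrac12\arg\lambda_s)]^{-1}$ and $\|Q_s\|_{L^1}=|\lambda_s|^{-1/2}[\cos(\tfrac12\arg\lambda_s)]^{-1}$. This is why it needs Lemma~\ref{tech}, a lower bound on the angle factor that is uniform in $s$.

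\textbf{Your route.} You use Plancherel, so the operator norms are the suprema of the symbols $|\xi^2+\lambda_s|^{-1}$ and $|\xi|\,|\xi^2+\lambda_s|^{-1}$. The lower bounds you need come from two elementary facts: $|\xi^2+\lambda_s|\ge|\operatorname{Im}\lambda_s|$, and $\re\lambda_s\to-1$ as $|s|\to\infty$. This replaces the angle lemma by plane geometry, and it shows that the blow-up at $\pm i\sqrt m$ comes from low frequencies.

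It does not improve the rate, however. For these kernels the $L^1$ norms and the symbol suprema agree up to an absolute constant; for example, $\sup_\xi|\xi^2+\lambda_s|^{-1}=\operatorname{dist}(\lambda_s,(-\infty,0])^{-1}\simeq\|G_s\|_{L^1}$. So Young's bound is not really ``crude'' here.

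\textbf{Handling of $v$ and the coefficients.} The paper simply uses $v=isu-\hat u$ together with $\|u\|_{L^2}\lesssim(1+|s|)/|s^2-m|$. It therefore never needs your exact cancellation $s^2+is\lambda_s=(s^2+ism)/(1+is)$. Likewise, your sharper identity $is-\lambda_s=(is-m)/(1+is)$ is correct, but the paper gets by with $|is-\lambda_s|\lesssim1+|s|$. In the other direction, the paper's route yields a single bound $\|R(is,A)\|\lesssim(1+s^2)/(|s-\sqrt m||s+\sqrt m|)$ valid for all $|s|\ne\sqrt m$. Your argument is organised regime by regime, which is all the statement requires.

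\textbf{Minor slips.} None of these affects the argument.
\begin{itemize}
\item The identity should read $\lambda_s=\frac{s^2-m}{1+s^2}(is-1)$; the line ``$(is+1)\cdots$ up to sign'' is garbled.
\item Near $\pm\sqrt m$, $\lambda_s$ does not become ``nearly real''. It tends to $0$ with argument tending to $\pm\arctan\sqrt m$ or $\pm(\pi-\arctan\sqrt m)$. That is exactly why $|\operatorname{Im}\lambda_s|\simeq|\lambda_s|$ there.
\item For the $H^1$ norm of the $\hat u$-terms, the clean observation is $(G_s*\hat u)'=G_s*\hat u'$. So the same symbol $|\xi^2+\lambda_s|^{-1}$ applies to $\hat u'$. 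The multiplier $\xi^2/|\xi^2+\lambda_s|$ you list is not the relevant one, although every bound you state is true.
\end{itemize}
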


In what follows, for $s\in\R$ with $|s|\neq\sqrt{m}$, the
number $\lambda_s\in\C\setminus (-\infty,0]$ is defined in \eqref{deff}.
We begin with an elementary technical lemma.

\begin{lemma}
\label{tech}
There exists a constant $m_0>0$ depending only on 
the (fixed) parameter $m>0$ such that 
$$
\cos\Big(\frac{\arg \lambda_s}{2}\Big)\geq m_0
$$
for all $s\in\R$ with $|s|\neq\sqrt{m}$.
\end{lemma}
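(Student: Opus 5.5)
We need $|\arg\lambda_s|\le \pi-\delta$ uniformly in $s$. Then $\cos(\arg\lambda_s/2)\ge \cos((\pi-\delta)/2)=\sin(\delta/2)>0$, so it suffices to keep $\lambda_s$ uniformly away from the negative real axis in angle.

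The plan is to compute the argument of $\lambda_s$ explicitly. Write
$$\lambda_s=\frac{(m-s^2)(1-is)}{1+s^2}.$$
We treat the two ranges of $s$ separately.

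\emph{Case $s^2<m$.} Here $m-s^2>0$, so $\arg\lambda_s=\arg(1-is)=-\arctan s$. Since $|s|<\sqrt m$, we get $|\arg\lambda_s|\le\arctan\sqrt m<\pi/2$.

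\emph{Case $s^2>m$.} Here $m-s^2<0$, so $\lambda_s$ is a positive multiple of $-1+is$. Hence $\lambda_s$ lies in the left half-plane, and its angle from the negative real axis is $\arctan|s|$. Since $|s|>\sqrt m$, this angle exceeds $\arctan\sqrt m$. Therefore $|\arg\lambda_s|=\pi-\arctan|s|\le \pi-\arctan\sqrt m$.

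Combining the two cases gives $|\arg\lambda_s|\le\pi-\arctan\sqrt m$ for all admissible $s$. Hence
$$\cos\Big(\frac{\arg\lambda_s}{2}\Big)\ge\cos\Big(\frac{\pi}{2}-\frac{\arctan\sqrt m}{2}\Big)=\sin\Big(\frac{\arctan\sqrt m}{2}\Big)=:m_0>0.$$

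The only point needing care is branch bookkeeping. We use the principal argument in $(-\pi,\pi)$, consistent with the square-root convention, and we need the sign of $m-s^2$ in each case; no real obstacle is expected beyond this. Note also that in the case $s^2<m$ the argument is bounded by $\pi/2$, so there $\cos(\arg\lambda_s/2)\ge\cos(\pi/4)$, which is at least $m_0$.
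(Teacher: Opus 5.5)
Your proof is correct and follows the paper's argument. Like the paper, you compute $\arg\lambda_s$ explicitly from the sign of $m-s^2$, obtaining $|\arg\lambda_s|<\pi/2$ for $|s|<\sqrt m$ and $|\arg\lambda_s|=\pi-\arctan|s|<\pi-\arctan\sqrt m$ for $|s|>\sqrt m$, which yields the same constant $m_0=\sin\bigl(\tfrac12\arctan\sqrt m\bigr)$; the only cosmetic difference is that you treat $s>\sqrt m$ and $s<-\sqrt m$ together via $|\arg\lambda_s|$, whereas the paper handles them separately.
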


\begin{proof}
For $|s| < \sqrt{m}$ we have $\arg \lambda_s = - \arctan s$,
yielding $|\arg \lambda_s| < \frac{\pi}{2}$. Therefore
$$\cos\Big(\frac{\arg \lambda_s}{2}\Big)\geq\frac{1}{{\sqrt{2}}}.$$
For $ s > \sqrt{m}$ we have $\arg \lambda_s = - \arctan s + \pi$, and thus
$\frac{\pi}{2}< \arg \lambda_s < - \arctan \sqrt{m} + \pi$. As a consequence
$$
\cos\Big(\frac{\arg \lambda_s}{2}\Big) \geq \cos\Big(- \frac{\arctan \sqrt{m}}{2} + \frac{\pi}{2}\Big)
= \sin \Big(\frac{\arctan \sqrt{m}}{2}\Big)>0.
$$
Finally, for $ s <-\sqrt{m}$ we have $\arg \lambda_s = - \arctan s - \pi$, which implies that
$\arctan \sqrt{m} - \pi< \arg \lambda_s < -\frac{\pi}{2}$
and the conclusion follows as before.
\end{proof}

\begin{proof}[Proof of Theorem \ref{res-est}]
Let $s\in\R$ with $|s|\neq\sqrt{m}$ and $\hat z=(\hat u,\hat v)\in X$ be fixed.
From Theorem \ref{theo-A} and its proof, we know that the resolvent
equation $(i s - A) z = \hat z$ has a unique solution $z = (u,v)=R(is,A)\hat z\in D(A)$
given by~\eqref{u-form}. Recall also that the Green's function
$G_s\in L^1(\R)$ is defined in~\eqref{defGreen}.

We first estimate $\|u\|_{L^2(\R)}$. To this end,
using Young’s inequality for convolutions, we compute
\begin{align*}
\|u\|_{L^2(\R)} \lesssim \frac{1}{{1+|s|}}\|G_s\|_{L^1(\R)}\big[\|\hat v\|_{L^2(\R)}
+|is-\lambda_s|\|\hat u\|_{L^2(\R)}\big]
 + \frac{1}{{1+|s|}}\|\hat u\|_{L^2(\R)},
\end{align*}
where, as in the remainder of the proof, the implicit constant depends on 
the (fixed) parameter $m>0$ but is independent
of both $s$ and $\hat z$. Noting that $|is-\lambda_s|\lesssim 1+|s|$, we find
\begin{align*}
\|u\|_{L^2(\R)} &\lesssim \|G_s\|_{L^1(\R)}\big[\|\hat v\|_{L^2(\R)}+ \|\hat u\|_{L^2(\R)}\big]
 + \frac{1}{{1+|s|}}\|\hat u\|_{L^2(\R)}\\
&\lesssim \|G_s\|_{L^1(\R)}\|\hat z\|_X +\frac{1}{{1+|s|}} \|\hat z\|_X.
\end{align*}
At this point, a straightforward calculation combined with Lemma \ref{tech} shows that
$$
\|G_s\|_{L^1(\R)} = \frac{1}{|\lambda_s|}\Big[\cos\Big(\frac{\arg \lambda_s}{2}\Big)\Big]^{-1}
\lesssim \frac{1}{|\lambda_s|} \lesssim \frac{1+ |s|}{|s-\sqrt{m}||s+\sqrt{m}|},
$$
yielding
$$
\|u\|_{L^2(\R)} \lesssim \frac{1+ |s|}{|s-\sqrt{m}||s+\sqrt{m}|} \|\hat z\|_X.
$$
Since $v = i s u - \hat u$, from the bound above we easily get
$$
\|v\|_{L^2(\R)} \lesssim \frac{1+ s^2}{|s-\sqrt{m}||s+\sqrt{m}|} \|\hat z\|_X.
$$
Next, by means of an elementary calculation, we see that
$$
u'(x)= \frac{1}{1+is}\big[(Q_s \,*\, \hat v)(x) + (is- \lambda_s)(Q_s \,*\, \hat u)(x)
+ \hat u'(x)\big],
$$ 
where we recall that the function
$Q_s\in L^1(\R)$ is defined in \eqref{defQ}.
As before, using Young’s inequality for convolutions and recalling that
$|is-\lambda_s|\lesssim 1+|s|$, we obtain
$$
\|u'\|_{L^2(\R)} \lesssim\|Q_s\|_{L^1(\R)}\|\hat z\|_X+\frac{1}{{1+|s|}}\|\hat z\|_X.
$$
An application of Lemma \ref{tech} combined with a straightforward calculation gives
$$
\|Q_s\|_{L^1(\R)}= \frac{1}{|\lambda_s|^{1/2}}\Big[\cos\Big(\frac{\arg \lambda_s}{2}\Big)\Big]^{-1}
\lesssim \frac{1}{|\lambda_s|^{1/2}} 
\lesssim \frac{1+ |s|^{1/2}}{|s-\sqrt{m}|^{1/2}|s+\sqrt{m}|^{1/2}},
$$
and thus
$$
\|u'\|_{L^2(\R)} 
\lesssim \frac{1+ |s|^{1/2}}{|s-\sqrt{m}|^{1/2}|s+\sqrt{m}|^{1/2}}\|\hat z\|_X.
$$
Collecting the bounds obtained so far, we see that 
$$
\|z\|_{X} \lesssim \frac{1+ s^2}{|s-\sqrt{m}||s+\sqrt{m}|} \|\hat z\|_X,
$$
which implies that
$$
\|R(is,A)\| \lesssim \frac{1+ s^2}{|s-\sqrt{m}||s+\sqrt{m}|}.
$$
The result follows.
\end{proof}

\section{Asymptotic Stability and Quantified Decay Rates}
\label{S4}

\noindent
The following is the main result of the paper.

\begin{theorem}
\label{thm:EnergyDecay}
The energy \eqref{en-def} satisfies $E_{z_0}(t)\to 0$ as $t\to\infty$ for every $z_0\in X$.
Moreover, 
$$E_{z_0}(t)= O(t^{-2}), \quad\,\, \text{as }t\to\infty,$$ 
for every $z_0\in \Ran(i\sqrt{m}-A)\cap \Ran(-i\sqrt{m}-A)$. This decay rate is
optimal in the sense that given any function $r:\R_+\to(0,\infty)$ satisfying $r(t)=o(t^{-2})$ as $t\to\infty$, there exists $z_0\in \Ran(i\sqrt{m}-A)\cap \Ran(-i\sqrt{m}-A)$ such that $E_{z_0}(t)\ne O(r(t))$ as $t\to\infty$.
\end{theorem}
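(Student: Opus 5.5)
The first assertion is handled with the Arendt--Batty--Lyubich--V\~u theorem \cite{AreBat88}. By Theorem~\ref{theo-gen}, $(T(t))_{t\geq0}$ is a contraction, hence bounded, $C_0$-semigroup. By Theorem~\ref{theo-A}(c), the set $\sigma(A)\cap i\R=\{\pm i\sqrt m\}$ is finite and in particular countable. By Theorem~\ref{theo-A}(d), $\sigma_p(A)\cap i\R=\emptyset$; since $X$ is a Hilbert space (so reflexive), this gives $\sigma_p(A^*)\cap i\R=\emptyset$ as well. The theorem then yields strong stability, $\|T(t)z_0\|_X\to0$ for every $z_0\in X$. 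Since $E_{z_0}(t)=\frac12\|T(t)z_0\|_X^2$, the energy tends to zero.

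For the rate we need to show $\|T(t)z_0\|_X=O(t^{-1})$ whenever $z_0\in\Ran(i\sqrt m-A)\cap\Ran(-i\sqrt m-A)$. The plan is to apply the result of \cite{Pau14c}, which handles finitely many imaginary spectral points by localising with resolvent identities, combined with the Hilbert space Batty--Chill--Tomilov theory \cite{BatChi16}. Concretely, write $z_0=(i\sqrt m-A)(-i\sqrt m-A)y$. This factorisation should be available: since the resolvents commute on $i\R\setminus\{\pm i\sqrt m\}$, membership in both ranges ought to give membership in the range of the product, and I would verify this using injectivity from Theorem~\ref{theo-A}(d). The weight
\[
\lambda\mapsto(\lambda-i\sqrt m)(\lambda+i\sqrt m)R(\lambda,A)
\]
is then bounded on $i\R$ near $\pm i\sqrt m$ by Theorem~\ref{res-est}(b),(c), and bounded at infinity by Theorem~\ref{res-est}(a) times a quadratic factor. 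The cleanest route is to split $z_0=z_1+z_2$ using spectral projections built from the functional calculus, so that each piece sees only one singular point. For each piece I would use a shifted semigroup $e^{\mp i\sqrt m t}T(t)$ so that the singularity sits at $0$. There, $\|R(is,A)\|=O(|s|^{-1})$ near $0$ and $O(1)$ at infinity give, by \cite[Theorem~8.4]{BatChi16} (the $\alpha=1$, $\beta=0$ case), $\|T(t)A(\cdot)\|=O(t^{-1})$ on the range of the relevant operator. This yields $E_{z_0}(t)=O(t^{-2})$.

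For optimality I would use the converse part of these results: in a Hilbert space, if $\|T(t)(i\sqrt m-A)(-i\sqrt m-A)R(1,A)^2\|=o(t^{-1})$, then the resolvent bound near $i\sqrt m$ would improve to $o(|s-\sqrt m|^{-1})$. That contradicts the exact blow-up rate. To get the lower bound $\|R(is,A)\|\gtrsim|s-\sqrt m|^{-1}$, I would use the approximate eigenvectors $z_k$ from the proof of Theorem~\ref{theo-A}(c), or simply the general bound $\|R(is,A)\|\geq\operatorname{dist}(is,\sigma(A))^{-1}$. The latter gives it immediately. Then a uniform boundedness (Banach--Steinhaus type) argument, as in \cite{Pau14c}, turns the failure of the operator-norm rate $o(t^{-1})$ into a single $z_0$ with $E_{z_0}(t)\neq O(r(t))$ for any given $r=o(t^{-2})$. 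This requires matching $\Ran(i\sqrt m-A)\cap\Ran(-i\sqrt m-A)$ with the range of the operator $(i\sqrt m-A)(-i\sqrt m-A)R(1,A)^2$, possibly up to its natural graph-norm topology.

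The main obstacle is the middle step: making the combination of \cite{BatChi16} and \cite{Pau14c} rigorous. This means checking that the hypotheses of the multi-point result in \cite{Pau14c} (a bounded semigroup, a finite imaginary spectrum and polynomial resolvent growth $|s\mp\sqrt m|^{-1}$) are exactly what Theorem~\ref{res-est} supplies. It also means identifying the class of data in that theorem with the intersection of ranges stated here. I would try first to quote the theorem from \cite{Pau14c} directly, with $\alpha=1$ at both points and $\beta=0$.
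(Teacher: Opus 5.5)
The strong-stability part matches the paper's argument, but the rate step has a genuine gap. Your ``cleanest route'' needs spectral projections separating $i\sqrt{m}$ from $-i\sqrt{m}$, and none exist.

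After Fourier transform, $A$ acts by the matrix $\begin{pmatrix}0&1\\-(\xi^2+m)&-\xi^2\end{pmatrix}$. Its eigenvalues $\lambda_\pm(\xi)=-\tfrac{\xi^2}{2}\pm\tfrac{i}{2}\sqrt{4m+4\xi^2-\xi^4}$ lie in $\sigma(A)$: the pairs $(\phi_k,\lambda_\pm(\xi)\phi_k)$ with $\phi_k(x)=k^{-1/2}\Phi(x/k)e^{i\xi x}$ are approximate eigenvectors, as in the proof of Theorem~\ref{theo-A}(c). These arcs tend to $\pm i\sqrt{m}$ as $\xi\to0$ and meet on the negative real axis. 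So $\pm i\sqrt{m}$ are not isolated, they lie in the same connected component of $\sigma(A)$, and no Riesz projection splits them.

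Without a splitting, $e^{\mp i\sqrt{m}t}T(t)$ still has two imaginary spectral points. Hence \cite[Theorem~8.4]{BatChi16}, which requires $\sigma(A)\cap i\R\subseteq\{0\}$, does not apply. Your fallback of quoting a rate theorem from \cite{Pau14c} is also not what the paper does; it takes only resolvent bounds from there.

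The missing idea is to keep both points and estimate the single bounded operator $B_1B_2=(i\sqrt{m}-A)(-i\sqrt{m}-A)R(1,A)^2$. Theorem~\ref{res-est} verifies Assumption~3 of \cite{Pau14c}. Lemma~13 there pushes the imaginary-axis bounds into half-discs $\Omega_\pm\subseteq\overline{\C_+}$ around $\pm i\sqrt{m}$, giving $\sup_{\lambda\in\Omega_\pm}\|R(\lambda,A)(\pm i\sqrt{m}-A)R(1\pm i\sqrt{m},A)\|<\infty$. The resolvent identity and boundedness of the other factor give $\sup_{\lambda\in\Omega_+\cup\Omega_-}\|R(\lambda,A)B_1B_2\|<\infty$, and Lemma~15 there covers the rest of $\C_+$. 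The operator-weighted Hilbert-space result \cite[Theorem~4.7]{BatChi16} then turns $\sup_{\lambda\in\C_+}\|R(\lambda,A)B_1B_2\|<\infty$ into $\|T(t)B_1B_2\|=O(t^{-1})$.

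Your identification of the data class with $\Ran(B_1B_2)$ is correct but needs no injectivity. The identity $(i\sqrt{m}-A)a=(-i\sqrt{m}-A)a+2i\sqrt{m}\,a$ shows $a\in\Ran(-i\sqrt{m}-A)$ whenever $(i\sqrt{m}-A)a\in\Ran(-i\sqrt{m}-A)$.

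The optimality step has a second gap. You invoke a ``converse'': that $\|T(t)B_1B_2\|=o(t^{-1})$ would force $\|R(is,A)\|=o(|s-\sqrt{m}|^{-1})$. This is not a result you can cite, you give no mechanism for it, and it does not follow from $O$-type characterisations of decay rates. Proving it is essentially as hard as the optimality claim itself. The obstruction the paper uses is spectral.

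After the uniform boundedness step, which you have right, the paper adapts the proof of \cite[Theorem~6.9]{BatChi16} and splits into two cases.
\begin{itemize}
\item If $\lambda_k\in\sigma(A)\cap\C_-$ converge to $i\sqrt{m}$, the spectral inclusion theorem for the Hille--Phillips calculus with $t_k=-1/\re\lambda_k$ gives $\limsup_{t\to\infty}t\|T(t)B_1B_2\|\ge 2\sqrt{m}/(e(1+m))>0$.
\item If instead $i\sqrt{m}$ were isolated, a Riesz decomposition yields a bounded part $A_1$ with $\sigma(A_1)=\{i\sqrt{m}\}$ and $\liminf_{t\to\infty}t\|(i\sqrt{m}-A_1)T_1(t)\|=0$. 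Then \cite[Theorem~2.1]{KalMon04} forces $A_1=i\sqrt{m}$, contradicting $i\sqrt{m}\notin\sigma_p(A)$.
\end{itemize}
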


\begin{proof}
Since $\sigma_p(A)\cap i\mathbb{R}=\emptyset$ and $\sigma(A)\cap i\R
=\{i\sqrt{m},-i\sqrt{m}\}$ is finite by Theorem~\ref{theo-A}, it follows from the 
classical Arendt--Batty--Lyubich--V\~u stability theorem~\cite{AreBat88} that
the contraction $C_0$-semigroup $(T(t))_{t\geq0}$ generated by $A$ is strongly asymptotically stable,
i.e.\ 
$$E_{z_0}(t)=\frac12 \|T(t)z_0\|_X^2\to 0,\quad\,\, \text{as }t\to\infty,$$ for 
all $z_0\in X$.
Next,  let $B_1 = (i \sqrt{m}-A)R(1,A)\in \B(X)$ and $B_2 = (-i \sqrt{m}-A)R(1,A)\in \B(X)$.
Note that $B_1$ and $B_2$ commute, and that $B_1B_2$ commutes with $A$.
By Theorems~\ref{theo-A} and~\ref{res-est} we have
\begin{align*}
|s-\sqrt{m}|\|R(is,A)\|\lesssim 1, \qquad&
s\in [0,\sqrt{m})\cup(\sqrt{m},2 \sqrt{m}],\\\noalign{\vskip0.3mm}
|s+ \sqrt{m}|\|R(is,A)\|\lesssim 1, 
\qquad& s\in [-2 \sqrt{m},-\sqrt{m})\cup (-\sqrt{m},0],\\\noalign{\vskip0.3mm}
\|R(is,A)\|\lesssim 1, \qquad& s\in \R\setminus [-2 \sqrt{m},2 \sqrt{m}],
\end{align*}
\noindent
and hence Assumption~3 in~\cite{Pau14c} is satisfied.
As a consequence, if we define 
$\Omega_\pm = \{\lambda\in\overline{\C_+}\, : 0<|\lambda\mp i\sqrt{m}|\le \min\{1,\sqrt{m}/2\}\}$
then~\cite[Lemma~13]{Pau14c} implies that 
\begin{align*}
\sup_{\lambda \in \Omega_\pm}  \| R(\lambda,A)(\pm i\sqrt{m}-A)R(1\pm i\sqrt{m},A)\|<\infty.
\end{align*}
Boundedness of $B_1$ and $B_2$ combined with the resolvent identity then yield
$\sup_{\lambda \in \Omega_+\cup \Omega_-}  \| R(\lambda,A)B_1B_2\|<\infty$.
Since~\cite[Lemma~15]{Pau14c} guarantees that 
$\sup_{\lambda\in\C_+\setminus (\Omega_+\cup \Omega_-)} \|R(\lambda,A)\|<\infty,$
 we obtain
$\sup_{\lambda\in \mathbb{C}_+}\| R(\lambda , A)B_1B_2\| <\infty.$
The latter bound and~\cite[Theorem 4.7]{BatChi16} 
lead to the estimate
\begin{equation}
\label{bou}\|T(t)B_1B_2\|^2=O(t^{-2}),\quad\,\, \text{as }t\to\infty.
\end{equation}
We now show that $\Ran(B_1B_2)=\Ran (i\sqrt{m}-A)\cap \Ran (-i\sqrt{m}-A)$.
Clearly $\Ran(B_1)=\Ran (i\sqrt{m}-A)$ and $\Ran(B_2)=\Ran (-i\sqrt{m}-A)$. It is also clear that
\begin{align*}
\Ran(B_1B_2)\subseteq 
\Ran(B_1)\cap \Ran(B_2) =
\Ran (i\sqrt{m}-A)\cap \Ran (-i\sqrt{m}-A).
\end{align*}
On the other hand, if 
$w\in  \Ran(B_1)\cap \Ran(B_2) $, 
then $w=B_1z$ for some $z\in X$, and so
$
w=B_1z = 2i \sqrt{m}R(1,A) z + B_2 z.
$
Recalling that $w\in \Ran(B_2)$, the identity above yields $R(1,A) z\in \Ran(B_2)$,
which is to say that  $R(1,A)z = B_2 y$ for some $y\in X$. Since $B_2 = (-i \sqrt{m}-1) R(1,A) + I$, we have $y\in D(A)$ and $z = B_2(I-A)y$, which finally implies that 
$$w=B_1B_2(I-A)y\in \Ran(B_1B_2).$$
This proves that $\Ran(B_1B_2)=\Ran (i\sqrt{m}-A)\cap \Ran (-i\sqrt{m}-A)$.
In light of \eqref{bou} and the equality $E_{z_0}(t)=\frac12 \|T(t)z_0\|_X^2$ for $z_0\in X$ and $t\ge0$, the proof
of the first part of Theorem \ref{thm:EnergyDecay} is complete.

For the optimality part, let $r:\R_+\to(0,\infty)$ be such that $r(t)=o(t^{-2})$ as $t\to\infty$ and suppose, for the sake of obtaining a contradiction, that
$E_{z_0}(t)= O(r(t))$ as $t\to\infty$ for all $z_0\in \Ran(i\sqrt{m}-A)\cap \Ran(-i\sqrt{m}-A)$. Since $\Ran (i\sqrt{m}-A)\cap \Ran (-i\sqrt{m}-A)=\Ran(B_1B_2)$, an application of the uniform boundedness principle yields $\|T(t)B_1B_2\|^2=O(r(t))$ as $t\to\infty$, and hence 
$t\|T(t)B_1B_2\|\to0$ as $t\to\infty$.
We now adapt the argument given in the proof of~\cite[Theorem~6.9]{BatChi16}. Suppose first that $ i\sqrt{m}$ is a  limit point of $\sigma(A)$. Then there exists a sequence $(\lambda_k)_{k\ge1}$ in $\sigma(A)\cap\C_-$ such that $\lambda_k\to i \sqrt{m}$ as $k\to\infty$. Let $t_k=-1/\re\lambda_k$ for $k\ge1$, noting that $t_k\to\infty$ as $k\to\infty$. 
Recalling that the spectral radius of a bounded linear
operator is dominated by the norm of the operator, it
follows from the spectral inclusion theorem for the Hille--Phillips functional calculus \cite[Theorem 16.3.5]{HPBOOK} that
$$\begin{aligned}
\limsup_{t\to\infty}\,t\|T(t)B_1B_2\|&\ge\limsup_{k\to\infty}\, t_k \left| e^{t_k\lambda_k}\frac{ (i \sqrt{m}-\lambda_k)(-i \sqrt{m}-\lambda_k)}{(1-\lambda_k)^2}\right| \\
&\ge\frac1e\lim_{k\to\infty}  \left| \frac{ i \sqrt{m}+\lambda_k}{(1-\lambda_k)^2}\right|=\frac{2\sqrt{m}}{e(1+m)}>0,
\end{aligned}$$
which contradicts our earlier observation that $t\|T(t)B_1B_2\|\to0$ as $t\to\infty$. 
So $i \sqrt{m}$ must be an isolated point of $\sigma(A)$. From the theory of spectral decompositions coming out of the Riesz--Dunford functional calculus, we therefore obtain a decomposition $X=X_0\oplus X_1$ of our space $X$ into two non-trivial closed subspaces $X_0, X_1$ which are invariant under $(T(t))_{t\ge0}$ and such that, furthermore, the restrictions $A_k$ of $A$ to $X_k$, $k=0,1$, satisfy $A_1\in\B(X_1)$, $\sigma(A_1)=\{i \sqrt{m}\}$ and $\sigma(A_0)=\sigma(A)\setminus\{i \sqrt{m}\}$;
see e.g.\ \cite[Chapter~IV]{ENGNAG}. 
Let $(T_1(t))_{t\ge0}$ denote the semigroup on $X_1$ generated by $A_1-i \sqrt{m}$. Then
$$\begin{aligned}
&\liminf_{t\to\infty}\, t\|(i \sqrt{m}-A_1)T_1(t)\|\\\le& \limsup_{t\to\infty}t\|T_1(t)(i \sqrt{m}-A_1)(-i \sqrt{m}-A_1)R(1,A_1)^2\|\|R(-i \sqrt{m},A_1)(I-A_1)^2\|\\
\le&\limsup_{t\to\infty}t\|T(t)B_1B_2\|\|R(-i \sqrt{m},A_1)(I-A_1)^2\| =0,
\end{aligned}$$
and it follows from~\cite[Theorem~2.1]{KalMon04} that $i \sqrt{m}-A_1=0$. This contradicts the fact that $i \sqrt{m}\not\in\sigma_p(A)$, so the argument is complete.
\end{proof}

We conclude this main section by analysing in more detail
the intersection $\Ran(i\sqrt{m}-A)\cap \Ran(-i\sqrt{m}-A)$ in order to better understand 
the class of solutions for which we have our decay estimate. First, we prove that this set is dense in the state
space.

\begin{proposition}\label{prp:dense}
The set  $\Ran(i\sqrt{m}-A)\cap \Ran(-i\sqrt{m}-A)$ is dense in $X$.
\end{proposition}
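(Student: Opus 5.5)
We will show that $\Ran(B_1B_2)$ is dense, since the proof of Theorem~\ref{thm:EnergyDecay} established $\Ran(i\sqrt{m}-A)\cap \Ran(-i\sqrt{m}-A)=\Ran(B_1B_2)$, with $B_1=(i\sqrt{m}-A)R(1,A)$ and $B_2=(-i\sqrt{m}-A)R(1,A)$. By the standard Hilbert space duality, $\overline{\Ran(B_1B_2)}=\ker((B_1B_2)^*)^\perp$. It therefore suffices to prove that $(B_1B_2)^*=B_2^*B_1^*$ is injective.

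For this it is enough that $B_1^*$ and $B_2^*$ are each injective. We have
\[
B_1^*=R(1,A^*)(-i\sqrt{m}-A^*) \quad\text{on } D(A^*),
\]
extended by closure, i.e.\ $B_1^* = I+(-i\sqrt{m}-1)R(1,A^*)$. Hence $B_1^*y=0$ forces $y\in D(A^*)$ with $(-i\sqrt{m}-A^*)y=0$, so injectivity of $B_1^*$ amounts to $-i\sqrt{m}\notin\sigma_p(A^*)$. Likewise, injectivity of $B_2^*$ amounts to $i\sqrt{m}\notin\sigma_p(A^*)$.

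So the key step is to show that $A^*$ has no eigenvalues at $\pm i\sqrt{m}$. I would use the general fact that for a contraction semigroup on a Hilbert space, if $A^*y=i\beta y$ with $\beta\in\R$, then $Ay=i\beta y$. The argument runs as follows. We have $\re\langle A^*y,y\rangle=0$, and $A^*$ is also dissipative (it generates the adjoint contraction semigroup). Then $T(t)^*y=e^{-i\beta t}y$, so $\langle T(t)y,y\rangle=e^{i\beta t}\|y\|^2$. Since $\|T(t)y\|\le\|y\|$, equality in Cauchy--Schwarz gives $T(t)y=e^{i\beta t}y$, and hence $Ay=i\beta y$. Combined with Theorem~\ref{theo-A}(d), this gives $\sigma_p(A^*)\cap i\R=\emptyset$.

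Alternatively, one could compute $A^*$ explicitly (formally $A^*(u,v)=(-v,-(v''-u''+\dots))$) and redo the argument of Theorem~\ref{theo-A}(d). The abstract route avoids identifying the domain of $A^*$, which would be the main technical nuisance, so I would take it.

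Putting the pieces together: $B_1^*$ and $B_2^*$ are injective, hence $(B_1B_2)^*$ is injective, hence $\Ran(B_1B_2)$ is dense. This proves the proposition.

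The only delicate points are the identification of $\ker B_j^*$ with eigenvectors of $A^*$, which relies on $R(1,A)^*=R(1,A^*)$, and the Cauchy--Schwarz equality argument. Both are routine.
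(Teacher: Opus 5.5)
Your proof is correct and is essentially the paper's argument in dual form. The paper invokes Arendt--Batty (Proposition~2.2 of their work) to get $\pm i\sqrt{m}\in\sigma_c(A)$, i.e.\ dense ranges of $B_1$ and $B_2$ (equivalently, injectivity of $B_1^*$ and $B_2^*$), and then notes that $\Ran(B_1B_2)=B_1(\Ran B_2)$ is dense; you instead establish $\sigma_p(A^*)\cap i\R=\emptyset$ directly via the Cauchy--Schwarz equality argument.

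One sign slip needs fixing: $A^*y=i\beta y$ gives $T(t)^*y=e^{i\beta t}y$, hence $\langle T(t)y,y\rangle=e^{-i\beta t}\|y\|^2$ and $Ay=-i\beta y$ rather than $Ay=i\beta y$. This is harmless here because Theorem~\ref{theo-A}(d) excludes every point of $i\R$ from $\sigma_p(A)$.
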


\begin{proof}
Since $A$ generates a contraction $C_0$-semigroup on a Hilbert space, and since
$\pm i\sqrt{m} \in \sigma(A) \setminus \sigma_{{p}}(A)$ by Theorem \ref{theo-A},
it follows that
$\pm i\sqrt{m} \in \sigma_{{c}}(A)$; see e.g.\ \cite[Proposition 2.2]{AreBat88}.
Setting $B_1 = (i \sqrt{m}-A)R(1,A)\in \B(X)$ and $B_2 = (-i \sqrt{m}-A)R(1,A)\in \B(X)$
as in the proof of Theorem \ref{thm:EnergyDecay}, we thus infer that
$\Ran(i\sqrt{m}-A)= \Ran(B_1)$ and $\Ran(-i\sqrt{m}-A) = \Ran(B_2)$ are dense in $X$,
which implies that $\Ran(B_1B_2)$ is dense in $X$.
Recalling that  
$\Ran (i\sqrt{m}-A)\cap \Ran (-i\sqrt{m}-A)=\Ran(B_1B_2)$
as we showed in the proof of Theorem \ref{thm:EnergyDecay}, the conclusion follows.
\end{proof}

\begin{remark}
Proposition~\ref{prp:dense} allows us to remove the appeal to the Arendt--Batty--Lyubich--V\~u stability theorem in the proof of the first part of Theorem~\ref{thm:EnergyDecay}. Indeed, once we have established that $\|T(t)z_0\|\to0$ as $t\to\infty$ for all $z_0\in \Ran(i\sqrt{m}-A)\cap \Ran(-i\sqrt{m}-A)$ then it follows by means of a standard approximation argument using Proposition~\ref{prp:dense} and uniform boundedness of the semigroup $(T(t))_{t\ge0}$ that $E_{z_0}(t)\to0$ as $t\to\infty$ for all $z_0\in X$. Note that this does not require us to know a \emph{rate} of decay of $\|T(t)z_0\|$ as $t\to\infty$ for $z_0\in \Ran(i\sqrt{m}-A)\cap \Ran(-i\sqrt{m}-A)$, and indeed it is sufficient that $\|T(t)B_1B_2\|\to0$ as $t\to\infty$. This unquantified decay result is a consequence of~Theorem~\ref{theo-A}, Theorem~\ref{res-est} and \cite[Theorem~6.14]{BatChi16} applied to the measure $\mu=\delta_0+\mu_{f}$, where $\delta_0$ is the Dirac measure concentrated at zero and $\mu_f$ denotes the absolutely continuous measure on $\R_+$ corresponding to the function $f\in L^1(0,\infty)$ defined by $f(t)=((1+m)t-2)e^{-t}$ for $t>0$. 
\end{remark}

Finally, in the same spirit as \cite{NgSei20}, we provide a more concrete characterisation of the elements of $\Ran(i\sqrt{m}-A)\cap \Ran(-i\sqrt{m}-A)$.
\begin{proposition}
Let $z_0=(u_0,v_0)\in X$. Then $z_0\in \Ran(i\sqrt{m}-A)\cap \Ran(-i\sqrt{m}-A)$ if and only
if all of the following conditions hold:
\begin{itemize}
\item[{(a)}] $x \mapsto \displaystyle \lim_{a\to -\infty}\int_a^x [i\sqrt{m}u_0(r)+v_0(r)] dr  \in L^2(\R)$;

\smallskip
\item[{(b)}]  $x \mapsto \displaystyle\lim_{a\to -\infty}\int_a^x [-i\sqrt{m}u_0(r)+v_0(r)] dr\in L^2(\R)$;

\smallskip
\item[{(c)}] $x \mapsto \displaystyle\lim_{b\to -\infty}
\int_b^x \Big(\lim_{a\to -\infty} \int_a^y [i\sqrt{m}u_0(r)+v_0(r)] dr\Big) dy \in L^2(\R)$;

\smallskip
\item[{(d)}] $x \mapsto \displaystyle\lim_{b\to -\infty}
\int_b^x \Big(\lim_{a\to -\infty} \int_a^y [-i\sqrt{m}u_0(r)+v_0(r)] dr\Big) dy \in L^2(\R)$.
\end{itemize}
\end{proposition}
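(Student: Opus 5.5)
The plan is to solve the two equations $(\pm i\sqrt{m}-A)z=z_0$ explicitly, in the same way as in the proof of Theorem~\ref{theo-A}(c),(d). At $s=\pm\sqrt{m}$ the coefficient $\lambda_s$ in \eqref{res} vanishes, so the problem reduces to the equation $-w''=f$ with $f\in L^2(\R)$. Solvability of such an equation with $w\in H^2(\R)$ is then characterised by integrability conditions on the first and second antiderivatives of $f$. I will treat $+i\sqrt{m}$ in detail, which should give conditions (a) and (c); the case $-i\sqrt{m}$ is identical and gives (b) and (d).

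\emph{Step 1: reduction to a second-order equation.} Let $z=(u,v)\in D(A)$ and set $w=u+v$. The equation $(i\sqrt{m}-A)z=z_0$ reads $i\sqrt{m}u-v=u_0$ and $i\sqrt{m}v-w''+mu=v_0$. Eliminating $v=i\sqrt{m}u-u_0$, the $u$-terms cancel exactly as in \eqref{sist1}--\eqref{sist2} with $s=\sqrt{m}$, and we obtain
\[
(1+i\sqrt{m})u-w=u_0,\qquad -w''=f_+:=i\sqrt{m}u_0+v_0\in L^2(\R).
\]
For $-i\sqrt{m}$ the same computation gives $(1-i\sqrt{m})u-w=u_0$ and $-w''=f_-:=-i\sqrt{m}u_0+v_0$. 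I will therefore show that $z_0\in\Ran(i\sqrt{m}-A)$ if and only if there exists $w\in H^2(\R)$ with $-w''=f_+$.

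\emph{Step 2: the reduction is an equivalence.} If $z$ exists, then $w=u+v\in H^2(\R)$ by the definition of $D(A)$. Conversely, given such a $w$, define
\[
u=\frac{w+u_0}{1+i\sqrt{m}}\in H^1(\R),\qquad v=w-u\in H^1(\R).
\]
Then $u+v=w\in H^2(\R)$, so $z=(u,v)\in D(A)$, and reversing the algebra of Step~1 shows that $(i\sqrt{m}-A)z=z_0$.

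\emph{Step 3: characterising solvability of $-w''=f$.} Suppose first that $w\in H^2(\R)$ with $-w''=f$. Then $w'\in H^1(\R)$, so $w'$ is continuous and $w'(a)\to0$ as $a\to\pm\infty$. From $w'(x)-w'(a)=-\int_a^x f\,dr$ we get that $F(x):=\lim_{a\to-\infty}\int_a^x f\,dr$ exists and equals $-w'(x)$, so $F\in L^2(\R)$; this is (a). Similarly, $w\in H^1(\R)$ vanishes at $-\infty$, so $w(x)=\lim_{b\to-\infty}\int_b^x w'\,dy=-\lim_{b\to-\infty}\int_b^x F\,dy$, and this function lies in $L^2(\R)$; this is (c).

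Conversely, assume (a) and (c) hold and set $w(x)=-\lim_{b\to-\infty}\int_b^x F\,dy$. Then $w\in L^2(\R)$ by (c). Since $F$ is locally absolutely continuous, $w$ is locally $C^1$ with $w'=-F\in L^2(\R)$ by (a). Moreover $w''=-f\in L^2(\R)$, so $w\in H^2(\R)$ solves the equation. Applying the same argument with $f_-$ in place of $f_+$ handles $-i\sqrt{m}$.

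The only delicate point I expect is in the forward direction of Step~3: one has to justify that the limits in (a) and (c) actually exist, rather than merely that some antiderivative is square integrable. This is settled by the decay at $\pm\infty$ of $H^1(\R)$ functions, which fixes the integration constants to be zero. Everything else is the algebra already carried out in the proof of Theorem~\ref{theo-A}.
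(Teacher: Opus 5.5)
Your proof is correct and takes essentially the approach the paper intends. The paper omits the proof and points to adapting \cite[Proposition 4.3]{NgSei20}, which likewise solves the equations at the spectral points explicitly: here $(\pm i\sqrt{m}-A)z=z_0$ reduces to $-w''=\pm i\sqrt{m}u_0+v_0$ (the coefficient in \eqref{res} vanishes at $s=\pm\sqrt{m}$), and $H^2$-solvability is characterised through the first and second antiderivatives, with integration constants fixed by the decay of $H^1(\R)$ functions at $-\infty$ — exactly as you do, treating each range separately so that (a),(c) correspond to $i\sqrt{m}$ and (b),(d) to $-i\sqrt{m}$.
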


We omit the proof of this last proposition, which can be carried out by
adapting the arguments contained in the proof of \cite[Proposition 4.3]{NgSei20}.

\section{The Half-Line Case} 
\label{S5}

\noindent 
As mentioned in the
introduction, our techniques can be
modified straightforwardly to deal with the Klein--Gordon equation on the half-line 
with Kelvin--Voigt damping and Dirichlet boundary condition:
\begin{equation}
\label{eq1half}
\begin{cases}\begin{aligned}  
 u_{tt}(x,t) &- u_{xx}(x,t) +m u(x,t)- u_{t xx}(x,t)=0, &&& x \in (0,\infty),\ t>0,\\
 u(0,t) &= 0, & && t>0, \\
 u(x,0)&=u_0(x), & && x \in (0,\infty), \\
 u_t(x,0) &= v_0(x), &&& x \in (0,\infty).
\end{aligned}\end{cases}
\end{equation}
Again $m>0$ is a fixed constant, 
while this time the initial data satisfy $u_0 \in H^1(0,\infty)$ 
and $v_0 \in L^2(0,\infty)$ with $u_0(0)=0$. 
In this situation,
we work with the Hilbert space 
$$
Y=\big\{(u,v) \in H^1(0,\infty)\times L^2(0,\infty): u(0) = 0\big\}$$
endowed with the same norm as in Section \ref{S2}, and we consider the operator $A$ 
with the new domain
$$
D(A) = \big\{ (u,v)\in Y : v \in H^1(0,\infty),\,\, v(0)=0,\,\, u+v \in H^2(0,\infty)\big\}.
$$
Theorems \ref{theo-A} and \ref{theo-gen} remain valid.
In particular, in order to show that 
$\pm i \sqrt{m}\in\sigma(A)$, 
we may use the same argument as in the proof of Theorem~\ref{theo-A}, 
but this time with
$$
u_k(x) = \frac{1}{\sqrt{k}} \Phi\Big(\frac{x}{k}-1\Big),\quad x \in [0,\infty),
$$
to accommodate the Dirichlet boundary condition. Likewise, 
for every fixed $s\in\R$ with $|s|\neq\sqrt{m}$ and $\hat z=(\hat u,\hat v)\in Y$, 
the resolvent equation $(i s - A) z = \hat z$ 
has a unique solution $z = (u,v)=R(is,A)\hat z\in D(A)$. Extending
$\hat u(x)$ and $\hat{v}(x)$ by zero for $x\leq0$, and considering the same
complex number $\lambda_s$ in \eqref{deff} and the same
Green's function $G_s$ in \eqref{defGreen}, the solution becomes 
\begin{align*}
u(x)&= \displaystyle\frac{1}{1+is}\big[(G_s \,*\, \hat v)(x)  +(is- \lambda_s)(G_s \,*\, \hat u)(x)
+ \hat u(x)-\hat \eta_s \hspace{0.2mm} G_s(x)\big]\\\noalign{\vskip1.7mm}
v(x)&= \displaystyle \frac{1}{1+is}\big[is(G_s \,*\, \hat v)(x) -(s^2+is\lambda_s)(G_s \,*\, \hat u)(x)-\hat u(x)-i s\hspace{0.2mm}\hat \eta_s \hspace{0.2mm} G_s(x)\big]
\end{align*}
for $x \in [0,\infty)$, where $\hat \eta_s\in\mathbb{C}$ is given by
$$\hat \eta_s=2\sqrt{\lambda_s}\big[(G_s \,*\, \hat v)(0)+(is- \lambda_s)(G_s \,*\, \hat u)(0)\big].$$ 
The additional term containing $\hat \eta_s$ is needed on account of the Dirichlet boundary condition.
Arguments completely analogous to those presented in
Section \ref{S3} lead to the same resolvent bounds of Theorem~\ref{res-est}, while
Theorem~\ref{thm:EnergyDecay} and its proof are unchanged.

\subsection*{Data availability statement}
Data sharing is not applicable to this paper because no dataset was analysed or generated
during the study.

\subsection*{Conflict of interest}
The authors declare no conflicts of interest.

\subsection*{Acknowledgments}
F.\ Dell'Oro is member of the Gruppo Nazionale per
l'Analisi Mate\-ma\-tica, la Probabilit\`a e le loro Applicazioni (GNAMPA)
of the Istituto Nazionale di Alta Matematica (INdAM).
{The research of L. Paunonen was supported by the Research Council of Finland grant 349002.}

\bibliography{DePaSe26-reference.bib}
\bibliographystyle{plain}

\end{document}